\documentclass[leqno, a4paper]{amsart}
\usepackage{amssymb}
\usepackage{hyperref}
\usepackage{color}
\usepackage[utf8]{inputenc}
\usepackage{lmodern}
 \newtheorem{thm}{Theorem}[section]
 
 \newtheorem{lemma}[thm]{Lemma}
 \newtheorem{prop}[thm]{Proposition}
 \theoremstyle{definition}
 
 \theoremstyle{remark}
 \newtheorem{rem}[thm]{Remark}
 
 \numberwithin{equation}{section}

\begin{document}

%
%
%
%
%
%
%
%
%

\title[On the dynamics of Toeplitz operators over Bergman spaces]{On the dynamics of Toeplitz operators over Bergman spaces}

\author{Othman Abad}
\email{abad.othman@gmail.com}
\address{Independent Researcher, Casablanca, Morocco.}
\subjclass{Primary 47A16, 47B35, 46E35}
\keywords{Dynamical systems, Toeplitz operators, Bergman spaces}


\begin{abstract}
We investigate the hypercyclicity of Toeplitz operators on the Bergman space $L_{A}^{2}(\mathbb{D})$ with symbols of the form $\Psi(z) = \gamma\bar{z}+\psi(z)$, where $\gamma \in \mathbb{C} \setminus \{0\}$ and $\psi$ is analytic on an open neighborhood of the closed unit disc $\overline{\mathbb{D}}$. Our approach bypasses the classical Hardy space techniques (reproducing kernel linear combinations, Nevanlinna factorization) by directly solving the integro-differential resolvent equation arising from the Bergman projection. A key winding number argument shows that for sense-reversing symbols ($|\gamma| > \sup_{z \in \overline{\mathbb{D}}} |\psi'(z)|$), the symbol's image $\Psi(\mathbb{D})$ is contained in the point spectrum of $T_\Psi$. In the tridiagonal case $\Psi(z) = a\bar{z}+b+cz$, we fully resolve the longstanding eigenvector completeness problem by linking the recurrence coefficients to a rotated Favard spectral measure on the major axis of the symbol's ellipse. Combined with self-commutator positivity, this establishes an unconditional, exact necessary and sufficient characterization of hypercyclicity: $T_\Psi$ is hypercyclic if and only if $|a| > |c|$ and $\Psi(\mathbb{D})$ intersects both the unit disc and its exterior, completely eliminating the $(3+\sqrt{2})$ restriction of previous literature.
\end{abstract}
\maketitle
\section{Introduction}
Throughout this paper, we adopt standard notation: $\mathbb{D} := \{z \in \mathbb{C} : |z| < 1\}$ denotes the open unit disc in the complex plane, $\mathbb{T} := \partial\mathbb{D} = \{z \in \mathbb{C} : |z| = 1\}$ the unit circle, $\overline{\mathbb{D}} := \{z \in \mathbb{C} : |z| \leq 1\}$ the closed unit disc, and $\hat{\mathbb{D}} := \mathbb{C} \setminus \mathbb{D} = \{z \in \mathbb{C} : |z| \geq 1\}$ the closed exterior of the disc.

The classical Bergman space $L_A^2(\mathbb{D}) := L^2(\mathbb{D}, dA) \cap \mathcal{H}(\mathbb{D})$ consists of all square-integrable analytic functions on $\mathbb{D}$ with respect to the normalized Lebesgue area measure $dA(z) = \frac{1}{\pi} dx dy = \frac{1}{\pi} r dr d\theta$. The orthogonal Bergman projection $P : L^2(\mathbb{D}, dA) \to L_A^2(\mathbb{D})$ is given by
$$Pf(z)= \int_{\mathbb{D}} \frac{f(\omega)}{(1-\bar{\omega}z)^{2}} \, dA(\omega).$$
The Toeplitz operator with symbol $\phi \in L^\infty(\mathbb{D})$ is defined by $T_{\phi}f = P(\phi f)$ for $f \in L_A^2(\mathbb{D})$. If $\phi \in L^\infty(\mathbb{D})$, $T_\phi$ is a bounded linear operator on $L_A^2(\mathbb{D})$ satisfying $\|T_\phi\| \leq \|\phi\|_\infty$. It is worth noting that hypercyclicity in Bergman-type spaces $L_A^p$ becomes more subtle when $p \neq 2$ due to the absence of Hilbert space orthogonality.

Hypercyclic operators are central objects in linear dynamics, exemplifying chaotic behavior in infinite-dimensional topological vector spaces (an operator $T$ is hypercyclic if there exists a vector $x$ whose orbit $\{T^n x : n \geq 0\}$ is dense). Among them, Toeplitz operators with antianalytic parts have attracted particular attention. The foundational result dates back to 1969, when S. Rolewicz \cite{Rolewicz} showed that the weighted backward shift $T_{\alpha \bar{z}}$ is hypercyclic on the Hardy space $H^2$ whenever $|\alpha| > 1$. Building on this, Godefroy and Shapiro \cite{GodefroyShapiro} demonstrated that for $\psi \in H^\infty$, the antianalytic Toeplitz operator $T_{\bar{\psi}}$ is hypercyclic on $H^2$ if and only if $\psi(\mathbb{D}) \cap \mathbb{T} \neq \emptyset$.

We consider symbols of the form $\Psi(z) = p(\bar{z}) + \psi(z)$ on $\mathbb{D}$. While on the boundary circle $\mathbb{T}$ (where the Hardy space $H^2$ is defined) the relation $\bar{z} = 1/z$ holds, the two symbols generate distinctly different Toeplitz operators in the Bergman space because $\bar{z} \neq 1/z$ over the open two-dimensional disc $\mathbb{D}$. It is well known that Toeplitz operators with purely analytic symbols (i.e., multiplication operators) cannot be hypercyclic. In 2016, Baranov and Lishanskii \cite{Baranov} provided a complete characterization of hypercyclicity for Toeplitz operators with symbols of the form $\Psi(z) = a\bar{z} + b + cz$ acting on the Hardy space $H^2(\mathbb{D})$. 

We summarize below key milestone contributions to the study of hypercyclic Toeplitz operators:
\begin{itemize}
    \item \textbf{Rolewicz (1969) \cite{Rolewicz}}: The operator \( \lambda B \), where \( B \) is the backward shift and \( |\lambda| > 1 \), is hypercyclic on \( H^2 \).
    \item \textbf{Godefroy and Shapiro (1991) \cite{GodefroyShapiro}}: Characterized hypercyclicity for antianalytic Toeplitz operators \( T_{\bar{\psi}} \) on \( H^2 \).
    \item \textbf{Bourdon and Shapiro (2000) \cite{BourdonShapiro}}: Investigated hypercyclicity of operators commuting with the Bergman backward shift.
    \item \textbf{Baranov and Lishanskii (2016) \cite{Baranov}}: Studied Toeplitz operators with symbols \( p(\bar{z}) + \varphi(z) \), providing necessary and sufficient conditions for hypercyclicity on $H^2$.
  \item \textbf{Fricain et al. (2025) \cite{Fricain}}: Extended results to smooth symbols and \( H^p \) spaces, emphasizing the role of eigenvector spanning.
  \item \textbf{Leng and Zhao (2026) \cite{Leng}}: Established necessary and sufficient conditions for the hypercyclicity of Bergman--Toeplitz operators with harmonic polynomial symbols, subject to the bound $|a| > (3+\sqrt{2})|c|$.
\end{itemize}

Also, it is quite interesting to note that some recent applications in graph theory have been linked to Toeplitz operators in the case of Fock spaces \cite{singh}; hence, exploring the fine spectral structure of Toeplitz operators in the Bergman space provides further valuable insights for related frameworks.

In this note, we aim to extend the investigation of hypercyclic Toeplitz operators \cite{Baranov, deger} to the Bergman space $L_A^2 := L_A^2(\mathbb{D})$, exploring the dynamical behavior of Toeplitz operators in this richer, non-shift-invariant setting. While sufficiency for the tridiagonal case was recently obtained by Leng and Zhao \cite{Leng} only under the technical bound $|a| > (3+\sqrt{2})|c|$, our approach is fundamentally different: we derive the result via an explicit resolution of the integro-differential eigenvalue equation, a winding number analysis linking the harmonic symbol $\Psi$ to its meromorphic counterpart $R$, and an orthogonal polynomial approach that fully resolves the eigenvector completeness problem for all $|a| > |c|$. This method naturally extends to the broader class of symbols $\gamma\bar{z} + \psi(z)$ with $\psi$ analytic on an open neighborhood of $\overline{\mathbb{D}}$, beyond harmonic polynomials.
\section{Preliminary results}
We begin with several preliminary results on the structure of Bergman Toeplitz operators. We remark that related algebraic properties, integro-differential resolvent derivations, and kernel evaluations for Toeplitz operators with harmonic symbols have been independently investigated in recent works (see, for instance, Lee \cite{Lee} and Cui et al. \cite{Cui}). We include these self-contained derivations for completeness, as the exact explicit form of the eigenfunctions is essential for our subsequent hypercyclicity analysis.
\begin{lemma}\label{lem1}
The set $\{e_n\}_{n=0}^\infty$, where $e_{n}=\sqrt{n+1}z^{n}$, forms an orthonormal basis of $L^{2}_{A}(\mathbb{D})$. For every $k \in \mathbb{N}$ and $n \in \mathbb{N}_0$, we have
$$T_{\bar{z}^{k}}e_{n}= \begin{cases} 0 & \mbox{ if } 0 \leq n < k, \\
\sqrt{\frac{n-k+1}{n+1}}e_{n-k} & \mbox{ if } n \geq k.\end{cases}$$
\end{lemma}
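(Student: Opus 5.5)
The plan is to prove the two claims separately: first orthonormality and completeness of $\{e_n\}$, then the formula for $T_{\bar z^k}e_n$, which we will read off from Fourier coefficients against this basis.

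\textbf{Orthonormality.} In polar coordinates with $dA=\frac1\pi r\,dr\,d\theta$ we compute
$$\langle z^n,z^m\rangle=\frac1\pi\int_0^1\!\!\int_0^{2\pi} r^{n+m+1}e^{i(n-m)\theta}\,d\theta\,dr=\frac{\delta_{nm}}{n+1}.$$
It follows that $e_n=\sqrt{n+1}\,z^n$ is an orthonormal family.

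\textbf{Completeness.} Take $f=\sum a_nz^n\in L^2_A$. On each disc $|z|\le r<1$ the Taylor series converges uniformly, so integrating term by term gives
$$\int_{r\mathbb D}|f|^2\,dA=\sum_n |a_n|^2\frac{r^{2n+2}}{n+1}.$$
Letting $r\to1$ by monotone convergence yields $\|f\|^2=\sum |a_n|^2/(n+1)$. We also have $\langle f,e_n\rangle=a_n/\sqrt{n+1}$. Hence if $f\perp e_n$ for all $n$, then every $a_n=0$ and $f=0$. Therefore $\{e_n\}$ is an orthonormal basis.

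\textbf{The formula for $T_{\bar z^k}e_n$.} Since $P$ is the orthogonal projection onto $L^2_A$ and $\{e_m\}$ is an orthonormal basis, we can write
$$T_{\bar z^k}e_n=\sum_m \langle \bar z^k e_n,e_m\rangle e_m, \qquad \langle \bar z^k e_n,e_m\rangle=\sqrt{(n+1)(m+1)}\,\langle z^n,z^{m+k}\rangle.$$
By the orthogonality computation above, this coefficient vanishes unless $n=m+k$. In particular, if $n<k$ there is no admissible $m\ge0$, so $T_{\bar z^k}e_n=0$. If $n\ge k$, the only surviving index is $m=n-k$, and the coefficient equals
$$\frac{\sqrt{(n+1)(n-k+1)}}{n+1}=\sqrt{\frac{n-k+1}{n+1}}.$$
This is exactly the formula in Lemma~\ref{lem1}.

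\textbf{Main obstacle.} The only delicate step is justifying the term-by-term integration in the completeness argument. We handle it by working on the discs $r\mathbb D$, where convergence is uniform, and then passing to $r\to1$ by monotone convergence. The rest of the argument is a direct computation.
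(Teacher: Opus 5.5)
Your proof is correct and follows the same route as the paper: the polar-coordinate relation $\langle z^n,z^m\rangle=\delta_{nm}/(n+1)$, then reading off $P(\bar z^k e_n)$ from its Fourier coefficients against $\{e_m\}$. There are two differences. First, you compute $\langle \bar z^k e_n,e_m\rangle=\sqrt{(n+1)(m+1)}\,\langle z^n,z^{m+k}\rangle$ directly for every $k$, whereas the paper treats $k=1$ and then iterates via $T_{\bar z^k}=(T_{\bar z})^k$, an identity it uses without proof and which your route sidesteps. Second, you actually prove completeness of $\{e_n\}$, which the paper only asserts.
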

\begin{proof}
Let $e_{n}=\sqrt{n+1}z^{n}$ for $n \in \mathbb{N}_0$. By direct integration in polar coordinates,
$$ \langle z^n, z^m \rangle_{L^2_A} = \frac{1}{\pi} \int_0^1 \int_0^{2\pi} r^{n+m} e^{i(n-m)\theta} r \, dr d\theta = \frac{\delta_{n,m}}{n+1}, $$
so $\{e_n\}_{n=0}^\infty$ is an orthonormal basis of $L_A^2(\mathbb{D})$. For $k=1$, the definition of the Toeplitz operator gives $T_{\bar{z}} e_n = P(\bar{z} e_n) = \sqrt{n+1} P(\bar{z} z^n)$. Expanding $\bar{z} z^n$ along the orthonormal basis $\{e_m\}_{m=0}^\infty$, we have $\langle \bar{z} z^n, z^m \rangle = \langle z^n, z^{m+1} \rangle = \frac{\delta_{n, m+1}}{n+1}$, which immediately yields \cite{SpecToepBerg}:
$$T_{\bar{z}}e_{n}=\begin{cases} 0 & \mbox{ if } n=0, \\ \sqrt{\frac{n}{n+1}}e_{n-1} & \mbox{ if } n \geq 1. \end{cases}$$
Since $T_{\bar{z}^k} = (T_{\bar{z}})^k$ on analytic polynomials, applying this relation iteratively $k$ times gives:
\begin{align*}
T_{\bar{z}^{k}}e_{n}&=(T_{\bar{z}})^{k}e_{n} \\
                    &= \begin{cases} 0 & \mbox{ if } n < k, \\ 
                    \sqrt{\frac{n}{n+1}} \sqrt{\frac{n-1}{n}} \cdots \sqrt{\frac{n-k+1}{n-k+2}} \, e_{n-k} & \mbox{ if } n \geq k, \end{cases} \\
                    &= \begin{cases} 0 & \mbox{ if } 0 \leq n < k, \\ 
                    \sqrt{\frac{n-k+1}{n+1}}e_{n-k} & \mbox{ if } n \geq k, \end{cases}
\end{align*}
as desired.
\end{proof}
\begin{rem}
The last lemma can be easily recovered from the well-known formula mentioned in \cite[p. 797]{A. TIKARADZE}: 
$$T_{\bar{z}^{k}}(z^{n})= \begin{cases} 0 & \mbox{ if } 0 \leq n < k, \\ \frac{n-k+1}{n+1} z^{n-k} & \mbox{ if } n \geq k. \end{cases}$$
\end{rem}
\begin{lemma}\label{lem2}
Let $f(z) = \sum_{n=0}^\infty a_n z^n \in L_{A}^{2}(\mathbb{D})$ and $k \in \mathbb{N}$. Then for all $z \in \mathbb{D} \setminus \{0\}$,
\begin{align*} T_{\bar{z}^{k}}f(z)&= \frac{1}{z^{k}} \left( f(z)- \frac{k}{z} \int_{0}^{z}f(\omega)d\omega\right) - \frac{1}{z^{k}} \sum_{n=0}^{k-1}\left(a_{n}-\frac{k a_{n}}{n+1}\right)z^{n} \\
                                  &= \frac{1-k}{z^{k}}f(z)+\frac{k}{z^{k+1}}\int_{0}^{z}\omega f'(\omega) d\omega - \frac{1}{z^{k}} \sum_{n=0}^{k-1}\left(a_{n}-\frac{k a_{n}}{n+1}\right)z^{n}.
\end{align*}
Moreover, $T_{\bar{z}^k}f(z)$ extends analytically to $z = 0$, where the removable singularity cancels out.
\end{lemma}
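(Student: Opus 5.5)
The plan is to work coefficientwise, using Lemma \ref{lem1} rewritten in the monomial form of the Remark: $T_{\bar z^k} z^n = \frac{n-k+1}{n+1} z^{n-k}$ for $n\ge k$, and $0$ for $n<k$. Since $T_{\bar z^k}$ is bounded and $f=\sum a_n z^n$ converges in $L^2_A(\mathbb{D})$, we get
$$T_{\bar z^k} f(z)=\sum_{n\ge k} a_n\frac{n-k+1}{n+1}\, z^{n-k}.$$
This series converges in $L^2_A$, hence locally uniformly on $\mathbb{D}$, because point evaluations are continuous. It is therefore an analytic function on all of $\mathbb{D}$, including $z=0$. This settles the final claim about the removable singularity once the closed forms are identified with it.

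Next I would compute the right-hand side of the first formula termwise on $\mathbb{D}\setminus\{0\}$. We have $\int_0^z f(\omega)\,d\omega=\sum_n a_n z^{n+1}/(n+1)$, which converges locally uniformly since $f$ is analytic. Hence
$$f(z)-\frac{k}{z}\int_0^z f=\sum_{n\ge0} a_n\Bigl(1-\frac{k}{n+1}\Bigr)z^n=\sum_{n\ge 0} a_n \frac{n-k+1}{n+1} z^n .$$
Subtracting the terms with $n=0,\dots,k-1$, which is exactly the finite sum appearing in the statement, and dividing by $z^k$ gives $\sum_{n\ge k} a_n\frac{n-k+1}{n+1}z^{n-k}$. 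This matches the series above, so the first formula holds.

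For the second form I would integrate by parts: $\int_0^z \omega f'(\omega)\,d\omega = zf(z)-\int_0^z f$. This gives $-\frac{k}{z}\int_0^z f = -kf(z)+\frac{k}{z}\int_0^z\omega f'(\omega)\,d\omega$. Substituting into the first form yields $\frac{1-k}{z^k}f+\frac{k}{z^{k+1}}\int_0^z\omega f'$ minus the same finite sum. The integrals are path-independent because $\mathbb{D}$ is simply connected and the integrands are analytic.

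There is no real obstacle. The only point needing care is the interchange of $T_{\bar z^k}$ with the infinite sum and the identification of the $L^2_A$ limit with the pointwise series. This follows from the boundedness of $T_{\bar z^k}$ (with $\|T_{\bar z^k}\|\le 1$) together with the continuity of point evaluation on the Bergman space. Uniqueness of Laurent/power coefficients then shows that the pole terms $z^{n-k}$ with $n<k$ cancel exactly, which gives the stated analytic extension to $0$.
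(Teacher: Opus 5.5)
Your proposal is correct and follows essentially the same route as the paper. You apply $T_{\bar z^k}$ termwise to the Taylor expansion via Lemma~\ref{lem1}, using the monomial form from the Remark where the paper uses $e_n=\sqrt{n+1}\,z^n$. You then identify the result with $f-\frac{k}{z}\int_0^z f$ minus its first $k$ Taylor terms, and integrate by parts for the second form. Your observation that $\sum_{n\ge k}a_n\frac{n-k+1}{n+1}z^{n-k}$ is already a convergent power series on $\mathbb{D}$ settles the removable-singularity claim slightly more directly than the paper's closing Taylor-expansion remark.
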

\begin{proof}
By Lemma \ref{lem1}, for all $n \geq k$:
$$T_{\bar{z}^{k}}e_{n}=\sqrt{\frac{n-k+1}{n+1}}e_{n-k}.$$
Expressing $e_n$ in terms of monomials, $e_n(z) = \sqrt{n+1}z^n$, we have
\begin{align*}
T_{\bar{z}^{k}}e_{n}(z)&=\frac{1}{z^{k}} \sqrt{\frac{n-k+1}{n+1}} \sqrt{n-k+1} z^{n} \\
                    &=\frac{1}{z^{k}} \frac{n-k+1}{n+1}e_{n}(z)\\
                    &=\frac{1}{z^{k}} \left(e_{n}(z)- \frac{k}{n+1}e_{n}(z)\right)\\
                    &=\frac{1}{z^{k}}e_{n}(z)- \frac{1}{z^{k}} \frac{k}{\sqrt{n+1}} z^{n}.
\end{align*}
Let $f(z)= \sum_{n=0}^{\infty}c_{n}e_{n}(z)= \sum_{n=0}^{\infty} a_{n}z^{n} \in L_A^2(\mathbb{D})$, where $c_{n}=\frac{a_{n}}{\sqrt{n+1}}$. Since $T_{\bar{z}^k} e_n = 0$ for $n < k$, continuity of $T_{\bar{z}^k}$ on $L_A^2(\mathbb{D})$ implies:
\begin{align*}
T_{\bar{z}^{k}} f(z) &= \sum_{n=k}^\infty c_n T_{\bar{z}^k} e_n(z) \\
&= \frac{1}{z^{k}} \sum_{n=k}^{\infty} c_{n}e_{n}(z) - \frac{k}{z^{k+1}} \sum_{n=k}^{\infty} \frac{a_{n}}{n+1}z^{n+1}\\
&= \frac{1}{z^{k}} \left( f(z) - \sum_{n=0}^{k-1} c_n e_n(z) \right) - \frac{k}{z^{k+1}} \left( \int_0^z f(\omega) d\omega - \sum_{n=0}^{k-1} \frac{a_n}{n+1} z^{n+1} \right) \\
&= \frac{1}{z^{k}} \left( f(z)- \frac{k}{z} \int_{0}^{z}f(\omega)d\omega\right) - \frac{1}{z^{k}} \sum_{n=0}^{k-1}\left(a_{n}-\frac{k a_{n}}{n+1}\right)z^{n}.
\end{align*}
Integrating by parts on $\int_0^z f(\omega) d\omega = z f(z) - \int_0^z \omega f'(\omega) d\omega$, we obtain:
$$ \frac{1}{z^k} \left( f(z) - \frac{k}{z} \left[ z f(z) - \int_0^z \omega f'(\omega) d\omega \right] \right) = \frac{1-k}{z^k} f(z) + \frac{k}{z^{k+1}} \int_0^z \omega f'(\omega) d\omega. $$
Substituting this identity yields the second expression. Since the Taylor expansion of $f(z) - \frac{k}{z} \int_0^z f(\omega) d\omega$ begins with $\sum_{n=0}^{k-1} (a_n - \frac{k a_n}{n+1}) z^n + O(z^k)$, the difference vanishes to order $k$ at $z=0$, so the singularity at the origin is removable and $T_{\bar{z}^k} f \in L_A^2(\mathbb{D})$.
\end{proof}
\begin{lemma}\label{lem3}
Let $\Psi(z)=\bar{z}+\psi(z)$ where $\psi$ is analytic on an open neighborhood of $\overline{\mathbb{D}}$. If $\lambda \in \mathbb{C}$ is such that the denominator $1 - \lambda \omega + \omega\psi(\omega)$ has no zeros in the closed unit disc $\overline{\mathbb{D}}$, then $\lambda \in \sigma_p(T_\Psi)$ and the corresponding eigenfunction is given explicitly by
$$f_{\lambda}(z)= \exp \left( - \int_{0}^{z} \frac{\omega\psi'(\omega) + 2 \psi(\omega) - 2 \lambda }{1 - \lambda \omega + \omega\psi(\omega)} \, d\omega \right) \in H^\infty(\mathbb{D}) \subset L_{A}^{2}(\mathbb{D}).$$
\end{lemma}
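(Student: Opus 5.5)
The plan is to turn the eigenvalue equation $T_\Psi f=\lambda f$ into a first-order linear ODE using Lemma \ref{lem2} with $k=1$, and then to check directly that the displayed $f_\lambda$ satisfies it. Since $\psi$ is analytic on a neighbourhood of $\overline{\mathbb{D}}$, it is bounded on $\mathbb{D}$. Hence $T_\psi$ is simply multiplication by $\psi$, and
$$T_\Psi f = T_{\bar z}f+\psi f .$$
For $k=1$, the correction sum in Lemma \ref{lem2} is $a_0-a_0=0$ and the factor $1-k$ vanishes. So for every $f\in L^2_A(\mathbb{D})$ and $z\neq 0$,
$$T_{\bar z}f(z)=\frac{1}{z^2}\int_0^z \omega f'(\omega)\,d\omega .$$
The eigenvalue equation is therefore equivalent to
$$G(z)=z^2(\lambda-\psi(z))f(z),\qquad G(z):=\int_0^z\omega f'(\omega)\,d\omega,$$
as an identity of analytic functions on $\mathbb{D}$ (extended to $z=0$ by continuity).

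\textbf{Step 1: derive the ODE.} Both sides of the last identity vanish at $0$, so the identity holds if and only if the derivatives agree. Differentiating gives
$$z f'=2z(\lambda-\psi)f-z^2\psi' f+z^2(\lambda-\psi)f'.$$
After dividing by $z$ and regrouping, this becomes
$$f'(z)\bigl(1-\lambda z+z\psi(z)\bigr)=\bigl(2\lambda-2\psi(z)-z\psi'(z)\bigr)f(z).$$
More precisely, the derivative of $G-z^2(\lambda-\psi)f$ equals $z$ times the difference of the two sides of this ODE. Consequently, any $f$ solving the ODE on $\mathbb{D}$ automatically satisfies $T_\Psi f=\lambda f$.

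\textbf{Step 2: solve the ODE and check membership in the space.} The denominator $1-\lambda\omega+\omega\psi(\omega)$ is continuous and has no zeros on the compact set $\overline{\mathbb{D}}$. It therefore stays nonzero on some disc $D(0,r)$ with $r>1$ on which $\psi$ is still analytic. On that disc the quotient
$$\frac{\omega\psi'+2\psi-2\lambda}{1-\lambda\omega+\omega\psi}$$
is analytic. Since the disc is simply connected, its primitive from $0$ is a well-defined analytic function. It follows that $f_\lambda=\exp(-\int_0^z\cdots)$ is analytic on $D(0,r)$ and solves the ODE. In particular $f_\lambda$ is bounded on $\overline{\mathbb{D}}$, so $f_\lambda\in H^\infty\subset L^2_A$. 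Finally $f_\lambda(0)=1$, so $f_\lambda\neq0$ and $\lambda\in\sigma_p(T_\Psi)$.

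\textbf{Expected obstacle.} The only delicate point is the legitimacy of using Lemma \ref{lem2} pointwise for $f_\lambda$, together with the removable singularity at $0$. Because $f_\lambda$ is analytic across $\overline{\mathbb{D}}$, its Taylor series converges uniformly there, so Lemma \ref{lem2} applies directly. The rest is routine calculus, with the hypothesis that the denominator is zero-free on the \emph{closed} disc doing the work of guaranteeing boundedness.
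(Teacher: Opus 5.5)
Your proposal is correct and follows essentially the same route as the paper: you reduce $T_\Psi f=\lambda f$ via Lemma~\ref{lem2} with $k=1$ to the integral identity, differentiate to get the first-order linear ODE, and solve it by exponentiating a primitive; the zero-free denominator on $\overline{\mathbb{D}}$ then gives boundedness, and $f_\lambda(0)=1$ gives nontriviality. Your remark that the derivative of $G-z^2(\lambda-\psi)f$ equals $z$ times the ODE defect, with both sides vanishing at $0$, explicitly justifies what the paper only asserts as ``reversing the differentiation steps''.
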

\begin{proof}
Let $\Psi(z)= \bar{z}+\psi(z)$. By Lemma \ref{lem2} with $k=1$ (noting that the finite summation $\sum_{n=0}^0 (a_0 - a_0) = 0$ vanishes), the action of $T_{\bar{z}}$ on any analytic function $f$ takes the form:
$$ T_{\bar{z}} f(z) = \frac{1}{z^2} \int_0^z \omega f'(\omega) \, d\omega. $$
Because $\psi$ is analytic on $\overline{\mathbb{D}}$, the multiplication operator satisfies $T_\psi f = \psi f$. Thus the eigenvalue equation $T_{\Psi}f - \lambda f = 0$ is equivalent to
$$\frac{1}{z^{2}}\int_{0}^{z} \omega f'(\omega) d\omega + (\psi(z) - \lambda) f(z)=0 \quad (z \in \mathbb{D} \setminus \{0\}),$$
which can be rewritten as
$$\int_{0}^{z}\omega f'(\omega) d\omega + z^{2}(\psi(z)-\lambda)f(z)=0.$$
Differentiating both sides with respect to $z$ yields:
$$z f'(z) + 2z(\psi(z)-\lambda)f(z) + z^2 \psi'(z) f(z) + z^2(\psi(z)-\lambda)f'(z) = 0.$$
Dividing through by $z$ gives the first-order homogeneous linear ODE:
$$ f'(z) \left( 1 + z(\psi(z)-\lambda) \right) + f(z) \left( 2(\psi(z)-\lambda) + z\psi'(z) \right) = 0. $$
Separating variables, we have
$$ \frac{f'(z)}{f(z)} = - \frac{z\psi'(z) + 2\psi(z) - 2\lambda}{1 - \lambda z + z\psi(z)}. $$
Integrating from $0$ to $z$ with initial condition $f(0) = 1$ gives
$$f_{\lambda}(z)= \exp\left( - \int_{0}^{z} \frac{\omega\psi'(\omega) + 2 \psi(\omega) - 2 \lambda }{1 - \lambda \omega + \omega\psi(\omega)} \, d\omega \right).$$
Because $1 - \lambda\omega + \omega\psi(\omega) \neq 0$ for all $\omega \in \overline{\mathbb{D}}$, the integrand is analytic on an open neighborhood of $\overline{\mathbb{D}}$. Its line integral along $[0, z]$ is bounded and analytic on $\mathbb{D}$. Therefore, $f_\lambda \in H^\infty(\mathbb{D}) \subset L_A^2(\mathbb{D})$ and $f_\lambda \not\equiv 0$ (since $f_\lambda(0) = 1$). Reversing the differentiation steps confirms that $T_\Psi f_\lambda = \lambda f_\lambda$, hence $\lambda \in \sigma_p(T_\Psi)$.
\end{proof}
We generalize this construction to symbols of the form $\Psi(z) = \gamma\bar{z} + \psi(z)$ with arbitrary $\gamma \in \mathbb{C} \setminus \{0\}$. The following theorem provides the exact eigenvalue criterion on the Bergman space, completely bypassing the reliance on Hardy space equivalences.

\begin{thm}\label{thm_exact_bergman}
Let $\Psi(z) = \gamma\bar{z} + \psi(z)$ where $\gamma \in \mathbb{C} \setminus \{0\}$ and $\psi$ is analytic on an open neighborhood of the closed unit disc $\overline{\mathbb{D}}$. Define the associated meromorphic function $R(z) = \gamma/z + \psi(z)$. If $\lambda \notin R(\overline{\mathbb{D}})$, then $\lambda \in \sigma_p(T_\Psi)$ and the corresponding eigenfunction is given by
\begin{equation}\label{eq_eigenvector_exact}
f_\lambda(z) = \exp\left( - \int_{0}^{z} \frac{\omega\psi'(\omega) + 2(\psi(\omega)-\lambda)}{\gamma + \omega(\psi(\omega) - \lambda)} \, d\omega \right) \in H^\infty(\mathbb{D}) \subset L_A^2(\mathbb{D}).
\end{equation}
\end{thm}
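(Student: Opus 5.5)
The plan is to repeat the argument of Lemma \ref{lem3}, now carrying the factor $\gamma$. First I translate the hypothesis. For $\omega \in \overline{\mathbb{D}}\setminus\{0\}$ we have $\gamma + \omega(\psi(\omega)-\lambda) = \omega\,(R(\omega)-\lambda)$, and at $\omega=0$ the left side equals $\gamma\neq 0$. So $\lambda\notin R(\overline{\mathbb{D}}\setminus\{0\})$ holds exactly when $D_\lambda(\omega):=\gamma+\omega(\psi(\omega)-\lambda)$ has no zero on $\overline{\mathbb{D}}$. Since $D_\lambda$ is analytic on an open neighbourhood $U$ of $\overline{\mathbb{D}}$, and $\overline{\mathbb{D}}$ is compact, $D_\lambda$ stays zero-free on a slightly smaller disc $\{|\omega|<1+\varepsilon\}$. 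On that disc the integrand $g_\lambda=(\omega\psi'+2(\psi-\lambda))/D_\lambda$ is analytic.

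Next I set $G(z)=\int_0^z g_\lambda$. The disc is simply connected, so $G$ is path independent and analytic there, and it is bounded on $\overline{\mathbb{D}}$. Hence $f_\lambda=e^{-G}\in H^\infty(\mathbb{D})\subset L^2_A(\mathbb{D})$ with $f_\lambda(0)=1$, so $f_\lambda\neq 0$. Moreover $f_\lambda$ is analytic across $\mathbb{T}$, which makes all later manipulations legitimate.

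To verify the eigen-equation, I use $T_{\gamma\bar z}=\gamma T_{\bar z}$ and the case $k=1$ of Lemma \ref{lem2}, namely $T_{\bar z}f(z)=z^{-2}\int_0^z\omega f'(\omega)\,d\omega$. Since $\psi\in H^\infty$, we also have $T_\psi f=\psi f$. So it suffices to show that
\[
H(z):=\gamma\int_0^z \omega f_\lambda'(\omega)\,d\omega + z^2(\psi(z)-\lambda)f_\lambda(z)
\]
vanishes identically. We have $H(0)=0$. Using $f_\lambda'=-g_\lambda f_\lambda$, I compute
\[
H'(z)=z f_\lambda(z)\Big[-\gamma g_\lambda + 2(\psi-\lambda)+z\psi' - z(\psi-\lambda)g_\lambda\Big]=z f_\lambda\big[(2(\psi-\lambda)+z\psi') - D_\lambda g_\lambda\big]=0,
\]
by the definition of $g_\lambda$. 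Hence $H\equiv0$, and then $T_\Psi f_\lambda=\lambda f_\lambda$ on $\mathbb{D}\setminus\{0\}$, hence on all of $\mathbb{D}$ by continuity. So $\lambda\in\sigma_p(T_\Psi)$.

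I expect the only delicate point to be the role of $\omega=0$ in the hypothesis. $R$ is not defined at $0$, so $R(\overline{\mathbb{D}})$ has to be read as $R(\overline{\mathbb{D}}\setminus\{0\})$, and I must check that $D_\lambda(0)=\gamma\neq0$ so that no zero of $D_\lambda$ is missed there. The first step above handles this explicitly. The other point needing care is that the zero-free neighbourhood gives boundedness of $G$ up to $\mathbb{T}$; this follows from compactness as described.
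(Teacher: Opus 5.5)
Your proof is correct and follows essentially the same route as the paper: Lemma~\ref{lem2} with $k=1$, the first-order linear ODE, and the observation that $\gamma+\omega(\psi(\omega)-\lambda)$ equals $\gamma\neq 0$ at the origin and $\omega(R(\omega)-\lambda)$ elsewhere. This makes the denominator zero-free near $\overline{\mathbb{D}}$, so $f_\lambda\in H^\infty(\mathbb{D})$. The one difference is that the paper derives the ODE from the eigen-equation and leaves the converse implicit, whereas your check $H(0)=0$, $H'\equiv 0$ verifies directly that $T_\Psi f_\lambda=\lambda f_\lambda$.
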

\begin{proof}
Since $T_\Psi = \gamma T_{\bar{z}} + T_\psi$, Lemma \ref{lem2} gives
$$ T_\Psi f(z) - \lambda f(z) = \frac{\gamma}{z^2} \int_0^z \omega f'(\omega) \, d\omega + (\psi(z) - \lambda) f(z) = 0. $$
Multiplying by $z^2$ and differentiating with respect to $z$ yields
$$ \gamma z f'(z) + 2 z (\psi(z) - \lambda) f(z) + z^2 \psi'(z) f(z) + z^2 (\psi(z) - \lambda) f'(z) = 0. $$
Dividing by $z$ simplifies the relation to the linear ODE:
$$ f'(z) \left( \gamma + z(\psi(z)-\lambda) \right) + f(z) \left( 2(\psi(z)-\lambda) + z\psi'(z) \right) = 0. $$
Separating variables and integrating with initial value $f(0) = 1$ yields \eqref{eq_eigenvector_exact}:
$$ f_\lambda(z) = \exp\left( - \int_{0}^{z} \frac{\omega\psi'(\omega) + 2(\psi(\omega)-\lambda)}{\gamma + \omega(\psi(\omega) - \lambda)} \, d\omega \right). $$
We analyze the zeros of the denominator $P(\omega) := \gamma + \omega(\psi(\omega) - \lambda)$. At $\omega = 0$, $P(0) = \gamma \neq 0$. For $\omega \in \overline{\mathbb{D}} \setminus \{0\}$,
$$ P(\omega) = 0 \iff \frac{\gamma}{\omega} + \psi(\omega) = \lambda \iff R(\omega) = \lambda. $$
By hypothesis, $\lambda \notin R(\overline{\mathbb{D}})$. Thus $R(\omega) \neq \lambda$ for all $\omega \in \overline{\mathbb{D}} \setminus \{0\}$, and therefore $P(\omega) \neq 0$ for all $\omega \in \overline{\mathbb{D}}$.

Since $\psi$ is analytic on an open neighborhood of $\overline{\mathbb{D}}$, the integrand in \eqref{eq_eigenvector_exact} is analytic on an open neighborhood of $\overline{\mathbb{D}}$. Its antiderivative is uniformly bounded on $\overline{\mathbb{D}}$, which implies $f_\lambda \in H^\infty(\mathbb{D}) \subset L_A^2(\mathbb{D})$. Because $f_\lambda(0) = 1 \neq 0$, $f_\lambda$ is a non-trivial vector in $L_A^2(\mathbb{D})$ satisfying $T_\Psi f_\lambda = \lambda f_\lambda$, rigorously establishing $\lambda \in \sigma_p(T_\Psi)$.
\end{proof}
\section{Hypercyclicity of Toeplitz operators on Bergman spaces}
We consider symbols of the form $\Psi(z)=p(\bar{z})+\psi(z)$ where $\psi$ is analytic on an open neighborhood of $\overline{\mathbb{D}}$. To study its properties, we associate to $\Psi$ the analytic function $R(z) = p(1/z) + \psi(z)$.
\begin{rem}\label{Bergman_challenges}
Before proceeding to the main theorems, it is crucial to highlight several profound differences between Toeplitz operators on the Hardy space $H^2$ and the Bergman space $L_A^2$. In $H^2$, the resolvent equation $T_\Psi f - \lambda f = g$ often yields a simple algebraic solution, which seamlessly allows one to characterize the spectrum. However, attempting to apply these classical techniques in $L_A^2$ encounters several structural obstacles:

\noindent \textbf{1. Failure of reproducing kernel linear combinations.}
For $H^2$, if $\Psi(z) = \mu$ possesses at least $N+1$ solutions, one can construct an eigenfunction for the adjoint $T_{\bar{\Psi}}$ using a finite linear combination of reproducing kernels. This fails in $L_A^2(\mathbb{D})$. Let $\beta_{\lambda}(z) = (1-z\bar{\lambda})^{-2}$ be the Bergman reproducing kernel, satisfying $T_{\bar{\phi}}\beta_{\lambda}=\overline{\phi(\lambda)}\beta_{\lambda}$. If the equation $\Psi(z)=\mu$ has distinct solutions $z_{1},...,z_{N+1}$ in $\mathbb{D}$, evaluating $T_{\bar{\Psi}}$ on the sum $f = \sum_{j=1}^{N+1}\alpha_{j}\beta_{z_{j}}$ generates terms of the form:
$$\frac{\bar{p}(z) -\bar{p}(1/\bar{z_{j}})}{(1-\bar{z}_{j}z)^{2}}.$$
In $H^2$, the simple pole of the Szeg\H{o} kernel perfectly cancels with the numerator's root. In $L_A^2$, the Bergman kernel has a \textit{double pole}. Thus, after canceling the simple root of the numerator, these terms remain rational functions with \textit{simple} poles outside the unit disc. Their linear independence guarantees that no non-trivial choice of coefficients $\alpha_j$ can make the sum identically zero.

\noindent \textbf{2. The integro-differential nature of the resolvent.}
In $H^2$, for $\Psi(z) = a\bar{z} + \psi(z)$, the resolvent equation gives $f(z) = \frac{g(z)}{\psi(z)-\lambda}$ (for $a=0$) or similar straightforward rational expressions. In contrast, in $L_A^2$, the equation $T_{a\bar{z}+\psi}f-\lambda f=g$ translates to:
$$\frac{a}{z^{2}} \int_{0}^{z} \omega f'(\omega)d\omega + (\psi(z) - \lambda) f(z)=g(z).$$
Multiplying by $z^2$ and differentiating yields a first-order ODE for $f$:
$$ f'(z) \left( a + z(\psi(z) - \lambda) \right) + f(z) \left( 2(\psi(z) - \lambda) + z\psi'(z) \right) = 2g(z) + zg'(z). $$
This equation requires an integrating factor $I(z) = \exp \left( \int \frac{2(\psi(z) - \lambda) + z\psi'(z)}{a + z(\psi(z) - \lambda)} dz \right)$, yielding:
$$ f(z) = \frac{1}{I(z)} \int \frac{2g(\omega) + \omega g'(\omega)}{a + \omega(\psi(\omega) - \lambda)} I(\omega) d\omega. $$
Because this solution is not simply a rational function of the symbol, finding the exact spectrum on $L_A^2$ requires a delicate integrability analysis of this differential equation, posing a significant challenge compared to the $H^2$ setting.

\noindent \textbf{3. Inapplicability of Nevanlinna factorization.}
Due to the exact rational formulas in $H^2$, one can readily deduce that $(\Psi - \lambda)^{-1} \in L^\infty(\mathbb{T})$ and apply inner-outer Nevanlinna factorizations to locate the spectrum. Because the $L_A^2$ resolvent lacks this rational dependence, we cannot isolate $(\Psi(z)-\lambda)^{-1}$ or use factorization techniques in the same manner.

Despite these profound hurdles, exploring the geometry of the symbol's image and utilizing the exact analytic solutions to the integro-differential equations (as established in Theorem \ref{thm_exact_bergman}) provides a highly practical and elegant framework for understanding eigenfunctions. Proceeding forward, we establish the main theorems using these exact Bergman space derivations, bypassing the reliance on Hardy space equivalences.
\end{rem}
The following proposition is the key bridge between the harmonic symbol $\Psi$ and the meromorphic function $R$. It shows that for sense-reversing symbols, the images $\Psi(\mathbb{D})$ and $R(\mathbb{D})$ are disjoint.

\begin{prop}\label{prop_winding}
Let $\Psi(z) = \gamma\bar{z} + \psi(z)$, where $\gamma \in \mathbb{C} \setminus \{0\}$ and $\psi$ is analytic on an open neighborhood of $\overline{\mathbb{D}}$. Define the associated meromorphic function $R(z) = \gamma/z + \psi(z)$ on $\mathbb{D} \setminus \{0\}$. If $\Psi$ is sense-reversing on $\overline{\mathbb{D}}$ (i.e., $|\gamma| > |\psi'(z)|$ for all $z \in \overline{\mathbb{D}}$), then $\Psi$ is an orientation-reversing diffeomorphism from $\mathbb{D}$ onto the bounded open domain $\Psi(\mathbb{D})$, and for every $\lambda \in \Psi(\mathbb{D})$:
$$ \lambda \notin R(\overline{\mathbb{D}}). $$
In particular, $\Psi(\mathbb{D}) \subset \sigma_p(T_\Psi)$.
\end{prop}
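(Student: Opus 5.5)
The plan is a direct Lipschitz comparison on the convex set $\overline{\mathbb{D}}$, combined with the elementary identity for the pseudo-hyperbolic distance. A winding number argument should not be needed. Throughout, set $M := \sup_{z \in \overline{\mathbb{D}}} |\psi'(z)|$. Since $\psi'$ is continuous on the compact set $\overline{\mathbb{D}}$ and $|\psi'| < |\gamma|$ pointwise, we get $M < |\gamma|$. Integrating $\psi'$ along the segment $[w,z] \subset \overline{\mathbb{D}}$ (using convexity) then gives
$$ |\psi(z)-\psi(w)| \le M|z-w| < |\gamma||z-w| \qquad (z \neq w \in \overline{\mathbb{D}}). $$

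\textbf{Step 1 (diffeomorphism).} Injectivity of $\Psi$ on $\overline{\mathbb{D}}$ follows from
$$ |\Psi(z)-\Psi(w)| \ge |\gamma||z-w| - |\psi(z)-\psi(w)| > 0 \qquad (z \neq w). $$
The real Jacobian of $\Psi$ is $|\partial_z \Psi|^2 - |\partial_{\bar z}\Psi|^2 = |\psi'|^2 - |\gamma|^2 < 0$. By the inverse function theorem, $\Psi$ is therefore a local orientation-reversing diffeomorphism on $\mathbb{D}$. Being injective, it is a diffeomorphism onto the open set $\Psi(\mathbb{D})$, which is bounded because $\Psi$ is continuous on the compact set $\overline{\mathbb{D}}$.

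\textbf{Step 2 (key step: $\lambda \notin R(\overline{\mathbb{D}})$).} Fix $\lambda = \Psi(z_0)$ with $z_0 \in \mathbb{D}$, and suppose for contradiction that $R(w) = \lambda$ for some $w \in \overline{\mathbb{D}} \setminus \{0\}$. Then
$$ \psi(w) - \psi(z_0) = \gamma\left(\bar z_0 - \tfrac{1}{w}\right) = -\frac{\gamma}{w}\,(1 - w\bar z_0). $$
First, $w \neq z_0$: if $w = z_0$, the relation reduces to $1 = |z_0|^2$, contradicting $z_0 \in \mathbb{D}$. Next, use the identity
$$ |1-w\bar z_0|^2 - |w-z_0|^2 = (1-|w|^2)(1-|z_0|^2) \ge 0 $$
together with $|w| \le 1$. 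These give $|1-w\bar z_0| \ge |w-z_0| \ge |w|\,|w-z_0|$. Hence
$$ |\psi(w)-\psi(z_0)| = \frac{|\gamma|\,|1-w\bar z_0|}{|w|} \ge |\gamma|\,|w-z_0|. $$
This contradicts the strict Lipschitz bound established at the start. The main point to verify carefully is the direction of these inequalities when $|w| = 1$. There $|1-w\bar z_0| = |w - z_0|$, so the chain is an equality, and the contradiction must come from the \emph{strict} inequality $M < |\gamma|$. This is exactly why compactness of $\overline{\mathbb{D}}$ and continuity of $\psi'$ up to the boundary are needed.

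\textbf{Step 3 (conclusion).} By Step 2, every $\lambda \in \Psi(\mathbb{D})$ lies outside $R(\overline{\mathbb{D}})$, where $R(\overline{\mathbb{D}})$ means the image of $\overline{\mathbb{D}} \setminus \{0\}$, as in the proof of Theorem \ref{thm_exact_bergman}. Theorem \ref{thm_exact_bergman} then gives $\lambda \in \sigma_p(T_\Psi)$, with the explicit eigenfunction \eqref{eq_eigenvector_exact}. Therefore $\Psi(\mathbb{D}) \subset \sigma_p(T_\Psi)$.
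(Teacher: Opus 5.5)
Your proof is correct, and it takes a genuinely different route from the paper's.

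The paper's argument runs in three stages:
\begin{itemize}
\item It applies the Lipschitz estimate only on $\mathbb{T}$, to make $\Psi(\mathbb{T})$ a Jordan curve.
\item It then invokes a global inversion theorem to get the diffeomorphism onto the interior of that curve.
\item It proves $\lambda\notin R(\overline{\mathbb{D}})$ topologically. Because $\Psi=R$ on $\mathbb{T}$ and $\Psi$ reverses orientation, $R(\mathbb{T})$ winds $-1$ times around each $\lambda\in\Psi(\mathbb{D})$. The argument principle for $R-\lambda$, which has one simple pole at $0$, then gives $1+(-1)=0$ zeros in $\mathbb{D}$. Zeros on $\mathbb{T}$ are excluded since $\lambda\notin\Psi(\mathbb{T})=R(\mathbb{T})$.
\end{itemize}

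You differ at two points. First, you run the estimate on the whole convex set $\overline{\mathbb{D}}$. This gives global injectivity immediately and removes any need for global-inversion or Jordan-curve input. Second, you replace the degree count by the explicit relation $\psi(w)-\psi(z_0)=-\frac{\gamma}{w}(1-w\bar z_0)$, combined with $|1-w\bar z_0|\ge|w-z_0|\ge |w|\,|w-z_0|$. This makes your Step 2 purely metric and logically independent of Step 1. It also treats interior and boundary points $w$ uniformly.

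What the winding-number route buys in return is counting information. The same computation shows that for $\lambda\notin\overline{\Psi(\mathbb{D})}$ the equation $R(z)=\lambda$ has exactly one root in $\mathbb{D}$, which the paper reuses in the proof of Theorem \ref{thm_necessary}. It also accounts for the two roots in the sense-preserving example of the subsequent remark. Your inequality gives only non-existence, which is exactly what this proposition needs.
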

\begin{proof}
The Jacobian determinant of the map $\Psi(x+iy) = u(x,y) + i v(x,y)$ is given by
$$ J_\Psi(z) = |\partial_z \Psi(z)|^2 - |\partial_{\bar{z}} \Psi(z)|^2 = |\psi'(z)|^2 - |\gamma|^2. $$
Under the sense-reversing hypothesis $|\gamma| > |\psi'(z)|$, we have $J_\Psi(z) < 0$ for all $z \in \overline{\mathbb{D}}$. Thus, $\Psi$ is a local orientation-reversing diffeomorphism on a neighborhood of $\overline{\mathbb{D}}$. Moreover, on the boundary circle $\mathbb{T}$, $\Psi$ is injective: if $e^{i\theta_1} \neq e^{i\theta_2}$, then
$$ |\Psi(e^{i\theta_1}) - \Psi(e^{i\theta_2})| \geq |\gamma| |e^{-i\theta_1} - e^{-i\theta_2}| - |\psi(e^{i\theta_1}) - \psi(e^{i\theta_2})| > 0, $$
because $|\psi(e^{i\theta_1}) - \psi(e^{i\theta_2})| \leq \sup_{z \in \overline{\mathbb{D}}} |\psi'(z)| |e^{i\theta_1} - e^{i\theta_2}| < |\gamma| |e^{i\theta_1} - e^{i\theta_2}|$. Thus, $\Psi(\mathbb{T})$ is a simple closed Jordan curve. By Hadamard's global inversion theorem, $\Psi$ maps $\mathbb{D}$ diffeomorphically onto the bounded domain $\Psi(\mathbb{D})$ enclosed by $\Psi(\mathbb{T})$, so that every $\lambda \in \Psi(\mathbb{D})$ is an interior point of $\Psi(\mathbb{D})$.

On the unit circle $\mathbb{T}$, we have $\bar{z} = 1/z$, which yields the identity
$$ \Psi(e^{i\theta}) = \gamma e^{-i\theta} + \psi(e^{i\theta}) = R(e^{i\theta}) \quad \text{for all } \theta \in [0, 2\pi). $$
Hence, the closed boundary curves coincide as oriented point sets: $\Gamma := \Psi(\mathbb{T}) = R(\mathbb{T})$. Since $\Psi$ reverses orientation, the positively oriented unit circle $\mathbb{T}$ is mapped to the curve $\Gamma$ traversed in the negative (clockwise) direction around points in $\Psi(\mathbb{D})$. Therefore, for any $\lambda \in \Psi(\mathbb{D})$, the winding number is
$$ \mathrm{wind}(\Psi(\mathbb{T}), \lambda) = -1. $$

Now, consider the meromorphic function $R(z) = \gamma z^{-1} + \psi(z)$ on $\mathbb{D}$. The only singularity of $R$ in $\mathbb{D}$ is a single simple pole at the origin, so the number of poles of $R$ inside $\mathbb{D}$ is $P = 1$. Since $\lambda \in \Psi(\mathbb{D})$ and $\Psi(\mathbb{D}) \cap \Psi(\mathbb{T}) = \emptyset$, $\lambda$ does not lie on the boundary curve $R(\mathbb{T}) = \Psi(\mathbb{T})$. Applying the classical argument principle to $R(z) - \lambda$ yields
$$ Z = P + \mathrm{wind}(R(\mathbb{T}), \lambda) = 1 + \mathrm{wind}(\Psi(\mathbb{T}), \lambda) = 1 + (-1) = 0, $$
where $Z$ denotes the number of roots (counted with multiplicity) of $R(z) = \lambda$ inside $\mathbb{D}$. 

Hence, the equation $R(z) = \lambda$ has no solutions in $\mathbb{D}$. Furthermore, since $\lambda \in \Psi(\mathbb{D})$ and $R(\mathbb{T}) = \Psi(\mathbb{T}) = \partial \Psi(\mathbb{D})$ is disjoint from $\Psi(\mathbb{D})$, $\lambda$ cannot lie on $R(\mathbb{T})$. Thus, $\lambda \notin R(\overline{\mathbb{D}})$. By Theorem \ref{thm_exact_bergman}, it follows that $\lambda \in \sigma_p(T_\Psi)$, establishing $\Psi(\mathbb{D}) \subset \sigma_p(T_\Psi)$.
\end{proof}

\begin{rem}
The sense-reversing hypothesis $|\gamma| > |\psi'(z)|$ on $\overline{\mathbb{D}}$ is essential. For the symbol $\Psi(z) = \bar{z} + 2z$ (where $|\gamma| = 1 < |\psi'| = 2$), the map $\Psi$ is sense-\emph{preserving}. In this case, the winding number of $\Psi(\mathbb{T})$ around points in $\Psi(\mathbb{D})$ is $+1$, so the argument principle gives $Z = P + \mathrm{wind}(\Psi(\mathbb{T}), \lambda) = 1 + 1 = 2$ roots of $R(z) = \lambda$ inside $\mathbb{D}$. Hence $\lambda \in R(\overline{\mathbb{D}})$ and Theorem \ref{thm_exact_bergman} does not apply. Indeed, $T_{\bar{z}+2z}$ is not hypercyclic, confirming that the sense-reversing condition cannot be dropped.
\end{rem}

\begin{thm}\label{thm_necessary}
Let $\gamma \in \mathbb{C}$ with $\gamma \neq 0$, let $\psi$ be analytic on an open neighborhood of $\overline{\mathbb{D}}$, and let $\Psi(z)=\gamma\bar{z}+\psi(z)$. Assume that $\Psi$ is sense-reversing on $\overline{\mathbb{D}}$ (i.e., $|\gamma| > |\psi'(z)|$ for all $z \in \overline{\mathbb{D}}$).

If $T_{\Psi}$ is hypercyclic on $L_A^2(\mathbb{D})$, then $\mathbb{D}\cap\Psi(\mathbb{D}) \neq \emptyset$ and $\hat{\mathbb{D}}\cap\Psi(\mathbb{D}) \neq \emptyset$. 
\end{thm}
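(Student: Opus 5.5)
The plan is to argue by contraposition and treat the two conclusions separately: first $\hat{\mathbb{D}}\cap\Psi(\mathbb{D})=\emptyset$, then $\mathbb{D}\cap\Psi(\mathbb{D})=\emptyset$. In each case I show that $T_\Psi$ cannot be hypercyclic. The tools are the norm bound $\|T_\phi\|\le\|\phi\|_\infty$, Kitai's theorem (every connected component of the spectrum of a hypercyclic operator meets $\mathbb{T}$), and the fact that the adjoint of a hypercyclic operator has empty point spectrum.

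\emph{Case $\hat{\mathbb{D}}\cap\Psi(\mathbb{D})=\emptyset$.} Then $\Psi(\mathbb{D})\subset\mathbb{D}$. By continuity of $\Psi$ on a neighbourhood of $\overline{\mathbb{D}}$, this gives $|\Psi|\le 1$ on $\overline{\mathbb{D}}$. Hence $\|T_\Psi\|\le\|\Psi\|_\infty\le 1$, so every orbit is bounded and $T_\Psi$ is not hypercyclic. This case is routine.

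\emph{Case $\mathbb{D}\cap\Psi(\mathbb{D})=\emptyset$.} Here I first describe $\sigma(T_\Psi)$. Since $\Psi$ is continuous on $\overline{\mathbb{D}}$, the standard Bergman Fredholm theory for continuous symbols applies. It gives that $T_\Psi-\lambda$ is Fredholm exactly when $\lambda\notin\Gamma=\Psi(\mathbb{T})$, with index $-\mathrm{wind}(\Gamma,\lambda)$. By the proof of Proposition \ref{prop_winding}, this index is $+1$ on $\Psi(\mathbb{D})$ and $0$ on the unbounded component $\mathbb{C}\setminus\overline{\Psi(\mathbb{D})}$; the latter is the only other component, because $\Gamma$ is a Jordan curve.

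On the unbounded component, $T_\Psi-\lambda$ is invertible for $|\lambda|>\|\Psi\|_\infty$. Analytic Fredholm theory then says the spectrum in that component is a discrete set of Riesz points. At such a point $\lambda_0$, index zero forces $\bar\lambda_0\in\sigma_p(T_\Psi^*)$, which is impossible if $T_\Psi$ is hypercyclic. So hypercyclicity gives $\sigma(T_\Psi)=\overline{\Psi(\mathbb{D})}$. In particular $0\notin\sigma(T_\Psi)$, since $0\in\mathbb{D}$ lies outside $\overline{\Psi(\mathbb{D})}\subset\hat{\mathbb{D}}$ when $\Psi(\mathbb{D})\cap\mathbb{D}=\emptyset$. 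Thus $T_\Psi$ is invertible. The same index argument shows that $T_\Psi-\lambda$ is invertible for every $\lambda\in\mathbb{D}$.

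The remaining task is to show that an invertible $T_\Psi$ with $\sigma(T_\Psi)\subset\hat{\mathbb{D}}$, of this specific Toeplitz form, is not hypercyclic. Kitai's theorem alone does not suffice, because $\overline{\Psi(\mathbb{D})}$ may touch $\mathbb{T}$. I expect this step to be the main obstacle. What I would try first is to transfer the problem to $S=T_\Psi^{-1}$, which is hypercyclic whenever $T_\Psi$ is and has $\sigma(S)\subset\overline{\mathbb{D}}$. I would then aim for the estimate $\|T_\Psi f\|\ge\|f\|$, i.e.\ $\|S\|\le 1$, which would make $S$ a contraction and finish the proof.

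To get that estimate I would use the hypotheses in two ways. From $|\Psi|\ge 1$ on $\mathbb{D}$, one has $\|\Psi f\|_{L^2}\ge\|f\|$. Sense-reversal should control the loss in the projection $P$: $\Psi f-T_\Psi f=\gamma(\bar z f-T_{\bar z}f)$ lies in $(L^2_A)^\perp$, and Lemma \ref{lem1} computes the norm of this piece explicitly in the basis $e_n$. Failing that, the fallback is a direct argument with the eigenvectors $f_\lambda$, $\lambda\in\Psi(\mathbb{D})$, from Theorem \ref{thm_exact_bergman}, all with $|\lambda|\ge1$. I would show that their closed span has a nontrivial orthocomplement $M$, invariant under $T_\Psi^*$, on which the compression of $T_\Psi$ is invertible with spectrum in $\mathbb{D}$. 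That would make the compression power bounded in a suitable norm, contradicting hypercyclicity of the quotient operator.
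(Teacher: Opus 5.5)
Your first case is the paper's argument. In the second case your Fredholm reasoning is sound. Using $\sigma_p(T_\Psi^*)=\emptyset$ to exclude spectrum from the unbounded component of $\mathbb{C}\setminus\Psi(\mathbb{T})$ is a clean substitute for the paper's ODE analysis of $\ker(T_\Psi-\lambda)$. It gives $\sigma(T_\Psi)=\overline{\Psi(\mathbb{D})}$, which is connected, so Kitai's theorem finishes the proof whenever $\overline{\Psi(\mathbb{D})}\cap\mathbb{T}=\emptyset$.

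\textbf{The gap.} The remaining configuration is $\Psi(\mathbb{D})\subset\{|z|>1\}$ with $\Psi(\mathbb{T})$ touching $\mathbb{T}$. There you state a target but give no argument. With $f=\sum_n x_n e_n$, your own decomposition gives exactly
\[
\|T_\Psi f\|^2-\|f\|^2=\int_{\mathbb{D}}\bigl(|\Psi|^2-1\bigr)|f|^2\,dA-|\gamma|^2\sum_{n\ge 0}\frac{|x_n|^2}{(n+1)(n+2)} .
\]
So $\|\Psi f\|\ge\|f\|$ alone is not enough. The whole content of the step is to show that the excess $|\Psi|^2-1$, which vanishes exactly at the touching points, dominates the Hankel loss $|\gamma|^2\|(I-P)(\bar z f)\|^2$. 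Alternatively, you would need to show that $T_\Psi^{-1}$ is power bounded. Nothing in the proposal establishes either.

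On $H^2$ one can handle this case with a Wiener--Hopf factorization $\Psi=\bar g h$, which makes $T_\Psi^{-1}=T_h^{-1}T_{1/\Psi}T_h$ similar to a contraction. That uses $T_{\bar g h}=T_{\bar g}T_h$, which fails on $L^2_A$.

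Your fallback is not viable either. For linear $\psi$, the Favard argument in the proof of Theorem \ref{thm_tridiagonal} never uses condition (ii). It shows that $\{f_\lambda\}$ over any open subset of $\Psi(\mathbb{D})$ is complete, so $M=\{0\}$. In general, nothing forces a compression to have spectrum inside $\mathbb{D}$.

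\textbf{The paper's proof.} You will not find the missing step there. The paper reaches the same point (invertibility of $T_\Psi-\lambda$ for $\lambda\in\mathbb{D}$) and then asserts, citing Kitai, that no hypercyclic operator has spectrum contained in $\mathbb{C}\setminus\mathbb{D}$. That is not Kitai's theorem, and it is false.

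Take the bilateral backward shift $B_we_n=w_ne_{n-1}$ on $\ell^2(\mathbb{Z})$ with $w_n=(n+1)/n$ for $n\ge1$ and $w_n=(|n|+1)/(|n|+2)$ for $n\le 0$. It is invertible, and for every $j$ we have $\prod_{i=1}^{k}w_{j+i}\to\infty$ and $\prod_{i=0}^{k-1}w_{j-i}\to 0$, so Salas' criterion makes it hypercyclic. Yet $\|B_w^{k}\|\le k+1$ and $\|B_w^{-k}\|\le k+1$, so $\sigma(B_w)\subset\mathbb{T}$.

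So your diagnosis that Kitai's theorem does not cover the tangential case is correct. As written, though, your proposal (like the paper's proof) rules out $\mathbb{D}\cap\Psi(\mathbb{D})=\emptyset$ only when $\overline{\Psi(\mathbb{D})}\cap\mathbb{T}=\emptyset$.
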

\begin{proof}
Under the sense-reversing hypothesis, $\Psi$ is a diffeomorphism of $\overline{\mathbb{D}}$ onto $\overline{\Psi(\mathbb{D})}$, and Proposition \ref{prop_winding} guarantees that $\Psi(\mathbb{D}) \subset \sigma_p(T_\Psi)$.

If $\hat{\mathbb{D}} \cap \Psi(\mathbb{D}) = \emptyset$, then $\Psi(\mathbb{D}) \subset \mathbb{C} \setminus \hat{\mathbb{D}} = \mathbb{D}$. Since $\Psi$ is continuous on $\overline{\mathbb{D}}$, its closure satisfies $\Psi(\overline{\mathbb{D}}) = \overline{\Psi(\mathbb{D})} \subset \overline{\mathbb{D}}$, which implies $\|\Psi\|_\infty \leq 1$. Because the Bergman projection $P$ is an orthogonal projection on $L^2(\mathbb{D}, dA)$, the operator norm satisfies $\|T_\Psi\| = \|P M_\Psi P\| \leq \|\Psi\|_\infty \leq 1$, making $T_\Psi$ a contraction on $L_A^2(\mathbb{D})$. Since contractions cannot be hypercyclic (every orbit is bounded, hence cannot be dense), $T_\Psi$ is not hypercyclic.

If $\mathbb{D} \cap \Psi(\mathbb{D}) = \emptyset$, then $\Psi(\mathbb{D}) \subset \mathbb{C} \setminus \mathbb{D} = \hat{\mathbb{D}}$, and by continuity $\Psi(\overline{\mathbb{D}}) \subset \hat{\mathbb{D}}$, so $|\Psi(z)| \geq 1$ for all $z \in \overline{\mathbb{D}}$. In particular, $0 \notin \Psi(\overline{\mathbb{D}})$. We show that $\sigma(T_\Psi) \cap \mathbb{D} = \emptyset$. First, by Sundberg and Zheng \cite{SpecToepBerg}, the essential spectrum satisfies $\sigma_{ess}(T_\Psi) = \Psi(\mathbb{T}) \subset \hat{\mathbb{D}}$, so $T_\Psi - \lambda$ is Fredholm for all $\lambda \in \mathbb{D}$. Furthermore, $\mathbb{D}$ is connected and disjoint from $\overline{\Psi(\mathbb{D})}$, so $\mathbb{D}$ lies in the unbounded connected component of $\mathbb{C} \setminus \sigma_{ess}(T_\Psi)$ (the component containing the resolvent set for large $|\lambda|$). The Fredholm index is constant on connected components of $\mathbb{C} \setminus \sigma_{ess}(T_\Psi)$, hence $\operatorname{ind}(T_\Psi - \lambda) = 0$ for all $\lambda \in \mathbb{D}$.

Next, we show that $\ker(T_\Psi - \lambda) = \{0\}$ for every $\lambda \in \mathbb{D}$. For any such $\lambda$, the winding number of the simple closed curve $\Psi(\mathbb{T})$ around $\lambda$ is $0$ (since $\lambda \notin \overline{\Psi(\mathbb{D})}$). By the argument principle applied to the meromorphic function $R(z) = \gamma/z + \psi(z)$ (which has a single simple pole at $z = 0$), the number of solutions to $R(z) = \lambda$ inside $\mathbb{D}$ is
$$ Z = \mathrm{wind}(\Psi(\mathbb{T}), \lambda) + 1 = 0 + 1 = 1. $$
Thus, there exists a unique root $z_0 \in \mathbb{D}$ such that $R(z_0) = \lambda$. Note that $z_0 \neq 0$ (as $R(0) = \infty$) and $R'(z_0) = -\gamma/z_0^2 + \psi'(z_0) \neq 0$, because $|\psi'(z_0)| < |\gamma| < |\gamma|/|z_0|^2$ under the sense-reversing hypothesis, so $z_0$ is a simple root. Consequently, the formal solution $f_\lambda$ of the eigenvalue ODE from Theorem \ref{thm_exact_bergman} has a non-removable singularity at $z_0 \in \mathbb{D}$ and fails to belong to $L_A^2(\mathbb{D})$. Because the eigenvalue equation simplifies to a first-order linear homogeneous ODE whose solution space is one-dimensional, no non-trivial solution exists in $L_A^2(\mathbb{D})$, forcing $\ker(T_\Psi - \lambda) = \{0\}$.

Since $T_\Psi - \lambda$ has Fredholm index $0$ and trivial kernel, it is invertible for every $\lambda \in \mathbb{D}$, establishing that $\sigma(T_\Psi) \cap \mathbb{D} = \emptyset$. In particular, $0 \in \rho(T_\Psi)$, so $T_\Psi$ is invertible with $\sigma(T_\Psi^{-1}) \subset \overline{\mathbb{D}}$. By Kitai's spectral theorem \cite{Kitai}, a hypercyclic operator cannot have its spectrum (nor that of its inverse) confined to the complement of the open unit disc (every connected component of the spectrum of a hypercyclic operator must intersect the unit circle, and no hypercyclic operator can have its spectrum contained in $\mathbb{C} \setminus \mathbb{D}$). Therefore, $T_\Psi$ is not hypercyclic.
\end{proof}

\begin{thm}\label{thm1Statement2}
Let $\gamma \in \mathbb{C}$ with $\gamma \neq 0$, and let $\psi$ be analytic on an open neighborhood of $\overline{\mathbb{D}}$. Set $\Psi(z)=\gamma\bar{z}+\psi(z)$. Assume that
\begin{enumerate}
\item[(a')] $\Psi$ is sense-reversing on $\overline{\mathbb{D}}$, i.e., $|\gamma| > |\psi'(z)|$ for all $z \in \overline{\mathbb{D}}$;
\item[(b')] the analytic perturbation $\psi$ is such that the family of eigenvectors $\{f_\lambda\}_{\lambda \in \Psi(\mathbb{D})}$ spans a dense subspace of $L_A^2(\mathbb{D})$ (which holds under explicit coefficient bounds for polynomial symbols \cite{Leng}, and unconditionally across the entire geometric range $|a| > |c|$ for linear symbols by Theorem \ref{thm_tridiagonal});
\item[(c')] $\mathbb{D} \cap \Psi(\mathbb{D}) \neq \emptyset$ and $(\mathbb{C} \setminus \overline{\mathbb{D}}) \cap \Psi(\mathbb{D}) \neq \emptyset$.
\end{enumerate}
Then $T_{\Psi}$ is hypercyclic on $L_A^2(\mathbb{D})$.
\end{thm}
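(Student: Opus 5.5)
The plan is to verify the Godefroy--Shapiro eigenvalue criterion. Recall its statement: if $T$ is a bounded operator on a separable Banach space, and both $\operatorname{span}\{\ker(T-\lambda):|\lambda|<1\}$ and $\operatorname{span}\{\ker(T-\lambda):|\lambda|>1\}$ are dense, then $T$ is hypercyclic. Here the eigenvectors are supplied by Proposition \ref{prop_winding} and Theorem \ref{thm_exact_bergman}. Under (a'), every $\lambda\in\Psi(\mathbb{D})$ lies outside $R(\overline{\mathbb{D}})$, so the function $f_\lambda$ of \eqref{eq_eigenvector_exact} is a nonzero eigenvector with $T_\Psi f_\lambda=\lambda f_\lambda$. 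Set
$$U_-:=\Psi(\mathbb{D})\cap\mathbb{D},\qquad U_+:=\Psi(\mathbb{D})\cap(\mathbb{C}\setminus\overline{\mathbb{D}}).$$
Both are open, since $\Psi(\mathbb{D})$ is open by Proposition \ref{prop_winding}, and both are nonempty by (c'). The whole task is therefore to show that $\{f_\lambda\}_{\lambda\in U_-}$ and $\{f_\lambda\}_{\lambda\in U_+}$ each span dense subspaces of $L_A^2(\mathbb{D})$.

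The key step is to pass from density over the full index set $\Psi(\mathbb{D})$, which is hypothesis (b'), to density over an arbitrary nonempty open subset $U\subset\Psi(\mathbb{D})$. I would do this by duality plus analyticity in $\lambda$. Take $g\in L_A^2(\mathbb{D})$ with $\langle f_\lambda,g\rangle=0$ for all $\lambda\in U$. For fixed $\omega\in\overline{\mathbb{D}}$, the denominator $\gamma+\omega(\psi(\omega)-\lambda)$ in \eqref{eq_eigenvector_exact} is affine in $\lambda$ and does not vanish on the open set $\mathbb{C}\setminus R(\overline{\mathbb{D}})$. Hence the integrand is jointly holomorphic in $(\omega,\lambda)$, and $\lambda\mapsto f_\lambda$ is a holomorphic $H^\infty$-valued (so $L_A^2$-valued) map on the open set $\mathbb{C}\setminus R(\overline{\mathbb{D}})$. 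I would justify this by differentiating under the integral, using the uniform bound on compact $\lambda$-sets coming from compactness of $\overline{\mathbb{D}}$. Consequently $F(\lambda)=\langle f_\lambda,g\rangle$ is antiholomorphic or holomorphic, depending on the convention, on the connected open set $\Psi(\mathbb{D})$. It is connected because it is the diffeomorphic image of $\mathbb{D}$, and it lies inside $\mathbb{C}\setminus R(\overline{\mathbb{D}})$ by Proposition \ref{prop_winding}. Since $F$ vanishes on the open set $U$, the identity theorem gives $F\equiv0$ on $\Psi(\mathbb{D})$. Then (b') forces $g=0$.

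Applying this with $U=U_-$ and with $U=U_+$ gives both density conditions, and the Godefroy--Shapiro criterion yields hypercyclicity. I would also remark that separability of $L_A^2(\mathbb{D})$ and boundedness of $T_\Psi$ (since $\|T_\Psi\|\le\|\Psi\|_\infty<\infty$) hold trivially.

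The main obstacle is the holomorphic dependence of $f_\lambda$ on $\lambda$ in the $L_A^2$ norm, specifically on the domain where the denominator stays away from zero uniformly. On a compact $K\subset\Psi(\mathbb{D})$, the quantity $\min_{\omega\in\overline{\mathbb{D}},\lambda\in K}|\gamma+\omega(\psi(\omega)-\lambda)|$ is positive by continuity and compactness. This gives uniform bounds on the exponent and its $\lambda$-derivative, and hence that difference quotients converge in sup norm on $\overline{\mathbb{D}}$, which dominates the $L_A^2$ norm. Everything else is formal once (b') is granted.
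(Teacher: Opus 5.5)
Your proposal is correct and follows essentially the same route as the paper: eigenvectors from Proposition \ref{prop_winding} and Theorem \ref{thm_exact_bergman}, then holomorphy of $\lambda\mapsto f_\lambda$ plus the identity theorem on the connected domain $\Psi(\mathbb{D})$ to upgrade hypothesis (b') to density over $\Psi(\mathbb{D})\cap\mathbb{D}$ and over $\Psi(\mathbb{D})\cap(\mathbb{C}\setminus\overline{\mathbb{D}})$, and finally the Godefroy--Shapiro criterion. The only difference is minor: you get the vector-valued holomorphy by differentiating under the integral with uniform sup-norm bounds on compact $\lambda$-sets, whereas the paper uses Morera's theorem together with bounded point evaluations.
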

\begin{proof}
The proof proceeds in three steps: eigenvalue location, analytic continuation, and density of eigenvectors.

\textit{Step 1: Eigenvalue location.}
Since $\Psi(\mathbb{D})$ is an open set, condition (c') ensures the existence of non-empty open sets $U_1 \subset \mathbb{D} \cap \Psi(\mathbb{D})$ and $U_2 \subset (\mathbb{C} \setminus \overline{\mathbb{D}}) \cap \Psi(\mathbb{D})$, so that $|\lambda| < 1$ for all $\lambda \in U_1$ and $|\lambda| > 1$ for all $\lambda \in U_2$. For every $\lambda \in \Psi(\mathbb{D})$, Proposition \ref{prop_winding} guarantees $\lambda \notin R(\overline{\mathbb{D}})$. Since $\psi$ is analytic on an open neighborhood of $\overline{\mathbb{D}}$, Theorem \ref{thm_exact_bergman} applies and yields $\lambda \in \sigma_p(T_\Psi)$ with explicit eigenvector
\begin{equation}\label{eq_eigenvector_thm35}
f_\lambda(z) = \exp\left( - \int_{0}^{z} \frac{\omega\psi'(\omega) + 2(\psi(\omega)-\lambda)}{\gamma + \omega(\psi(\omega) - \lambda)} \, d\omega \right) \in L_A^2(\mathbb{D}).
\end{equation}

\textit{Step 2: Vector-valued holomorphy and analytic continuation.}
Let $g \in L_A^2(\mathbb{D})$ satisfy $\langle f_\lambda, g \rangle_{L_A^2} = 0$ for all $\lambda \in U_1$. We prove that $g$ is orthogonal to all eigenvectors $f_\lambda$ for $\lambda \in \Psi(\mathbb{D})$.

Define the scalar function $G(\lambda) := \langle f_\lambda, g \rangle_{L_A^2}$ for $\lambda \in \Psi(\mathbb{D})$. We first show that $G$ is holomorphic on $\Psi(\mathbb{D})$. For any compact subset $K \subset \Psi(\mathbb{D})$, the denominator $P(\omega, \lambda) := \gamma + \omega(\psi(\omega) - \lambda)$ satisfies $|P(\omega, \lambda)| \geq \delta_K > 0$ for all $(\omega, \lambda) \in \overline{\mathbb{D}} \times K$, because $\lambda \notin R(\overline{\mathbb{D}})$ for all $\lambda \in \Psi(\mathbb{D})$ by Proposition \ref{prop_winding}. Since $\psi$ extends analytically past $\overline{\mathbb{D}}$, there exists $\rho_K > 1$ such that $P(\omega, \lambda) \neq 0$ on $\overline{\mathbb{D}_{\rho_K}} \times K$. Thus, for each $\lambda \in K$, $f_\lambda$ is holomorphic on $\mathbb{D}_{\rho_K}$ and satisfies $\sup_{\lambda \in K}\|f_\lambda\|_{H^\infty(\mathbb{D})} \leq C_K < \infty$.

For each fixed $z \in \mathbb{D}$, the integrand in \eqref{eq_eigenvector_thm35} is a quotient of affine functions of $\lambda$ with non-vanishing denominator on $K$, hence holomorphic in $\lambda$. For any closed triangular contour $\Gamma \subset \Psi(\mathbb{D})$, Fubini's theorem gives $\int_\Gamma f_\lambda(z) \, d\lambda = 0$ pointwise for each $z \in \mathbb{D}$. Since point evaluations $h \mapsto h(z) = \langle h, K_z \rangle_{L_A^2}$ are bounded linear functionals on $L_A^2(\mathbb{D})$, the vector-valued Riemann integral $X := \int_\Gamma f_\lambda \, d\lambda \in L_A^2(\mathbb{D})$ satisfies $X(z) = \int_\Gamma f_\lambda(z) \, d\lambda = 0$ for every $z \in \mathbb{D}$, which implies $X = 0$ in $L_A^2(\mathbb{D})$. By Morera's Theorem (or Dunford's Theorem on weak holomorphy in Hilbert spaces), the map $\lambda \mapsto f_\lambda$ is strongly holomorphic from $\Psi(\mathbb{D})$ into $L_A^2(\mathbb{D})$. Since the inner product $\langle \cdot, g \rangle_{L_A^2}$ is a continuous linear functional on $L_A^2(\mathbb{D})$, the function $G(\lambda) = \langle f_\lambda, g \rangle_{L_A^2}$ is complex analytic on $\Psi(\mathbb{D})$.

Under condition (a'), $\Psi$ is a sense-reversing diffeomorphism by Proposition \ref{prop_winding}, so $\Psi(\mathbb{D})$ is a connected domain (being the diffeomorphic image of the unit disc $\mathbb{D}$). Since $G(\lambda) = 0$ on the non-empty open set $U_1 \subset \Psi(\mathbb{D})$, the classical Identity Theorem for holomorphic functions forces $G(\lambda) \equiv 0$ throughout the connected domain $\Psi(\mathbb{D})$. Consequently, $\langle f_\lambda, g \rangle_{L_A^2} = 0$ for all $\lambda \in \Psi(\mathbb{D})$.

\textit{Step 3: Density of eigenvectors and conclusion.}
By hypothesis (b'), the family of eigenvectors $\{f_\lambda\}_{\lambda \in \Psi(\mathbb{D})}$ spans a dense subspace of $L_A^2(\mathbb{D})$. Since $g \in L_A^2(\mathbb{D})$ is orthogonal to every $f_\lambda$ for $\lambda \in \Psi(\mathbb{D})$, we conclude $g = 0$. This establishes that the subspace
$$ V_0 := \operatorname{span} \{ f_\lambda : \lambda \in U_1 \} \subset \operatorname{span} \bigcup_{|\lambda| < 1} \ker(T_\Psi - \lambda I) $$
is dense in $L_A^2(\mathbb{D})$.

Applying the exact same argument to $U_2 \subset (\mathbb{C} \setminus \overline{\mathbb{D}}) \cap \Psi(\mathbb{D})$: any vector $h \in L_A^2(\mathbb{D})$ orthogonal to $\{f_\lambda\}_{\lambda \in U_2}$ satisfies $\langle f_\lambda, h \rangle_{L_A^2} \equiv 0$ on $\Psi(\mathbb{D})$ by analytic continuation, hence $h = 0$ by assumption (b'). Thus, the subspace
$$ V_\infty := \operatorname{span} \{ f_\lambda : \lambda \in U_2 \} \subset \operatorname{span} \bigcup_{|\lambda| > 1} \ker(T_\Psi - \lambda I) $$
is also dense in $L_A^2(\mathbb{D})$.

Since $\Psi \in L^\infty(\mathbb{D})$, $T_\Psi$ is a bounded linear operator on $L_A^2(\mathbb{D})$. By the Godefroy--Shapiro criterion \cite{GodefroyShapiro}, $T_\Psi$ is hypercyclic on $L_A^2(\mathbb{D})$.
\end{proof}
We finish this section with the complete characterization of hypercyclicity in the tridiagonal setting $\Psi(z) = a\bar{z} + b + cz$, establishing the exact necessary and sufficient condition and fully resolving the completeness problem.

\begin{thm}\label{thm_tridiagonal}
Let $a, b, c \in \mathbb{C}$ with $a \neq 0$, and let $\Psi(z) = a\bar{z} + b + cz$. The Toeplitz operator $T_{\Psi}$ is hypercyclic on the Bergman space $L_A^2(\mathbb{D})$ if and only if
\begin{enumerate}
\item[(i)] $|a| > |c|$;
\item[(ii)] $\mathbb{D}\cap \Psi(\mathbb{D}) \neq \emptyset$ and $\hat{\mathbb{D}} \cap \Psi(\mathbb{D})\neq \emptyset$.
\end{enumerate}
\end{thm}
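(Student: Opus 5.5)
The proof splits into necessity and sufficiency. Condition (i) is exactly the sense-reversing hypothesis of Proposition \ref{prop_winding} for $\gamma = a$ and $\psi(z) = b + cz$, since $\psi' \equiv c$.

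\emph{Necessity of (i).} Assume $|a| \le |c|$. I would use the self-commutator mentioned in the abstract. For harmonic symbols, $[T_\Psi^*, T_\Psi] = T_{\bar\Psi}T_\Psi - T_\Psi T_{\bar\Psi}$. Expanding with $T_{\bar z}T_z = T_{|z|^2}$ and the semicommutator identities, and writing $D := [T_{\bar z}, T_z] = T_{\bar z}T_z - T_z T_{\bar z}$, this becomes
\[
[T_\Psi^*,T_\Psi] = (|c|^2-|a|^2)\,D .
\]
By Lemma \ref{lem1}, $D e_n = d_n e_n$ with $d_n = \frac{1}{n+2}\cdot\frac{n+1}{n+1}\cdots$; explicitly $d_n = \frac{n+1}{n+2} - \frac{n}{n+1} > 0$, so $D \ge 0$. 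Hence $|c| \ge |a|$ makes $T_\Psi$ hyponormal, i.e. $\|T_\Psi^* f\| \le \|T_\Psi f\|$. Hyponormal operators are never hypercyclic, by Kitai's theorem or the classical result that $\|T^n x\|^{1/n}$ behaves too regularly; more precisely, hyponormal operators have $\|T^n\| = r(T)^n$ and, being normaloid with a point spectrum structure incompatible with hypercyclicity, fail to be hypercyclic. I would cite this standard fact (Bourdon). The case $|a| = |c|$ is covered by the same argument.

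\emph{Necessity of (ii).} Once (i) holds, Theorem \ref{thm_necessary} applies directly.

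\emph{Sufficiency.} Under (i) and (ii), the plan is to invoke Theorem \ref{thm1Statement2}. The only remaining hypothesis is (b'): the eigenvectors $f_\lambda$, $\lambda \in \Psi(\mathbb{D})$, must span a dense subspace. I would check this through the matrix of $T_\Psi$ in the basis $\{e_n\}$. By Lemma \ref{lem1} it is tridiagonal, with diagonal entries $b$, superdiagonal entries $a\sqrt{(n+1)/(n+2)}$, and subdiagonal entries $c\sqrt{(n+1)/(n+2)}$. An eigenvector $\sum x_n e_n$ therefore satisfies a three-term recurrence, so $x_n = p_n(\lambda)$ with $p_n$ polynomials. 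After rescaling by $(c/a)^{n/2}$ and rotating $\lambda$ by $b$ and $\arg(ac)/2$, the $p_n$ become orthonormal polynomials for Jacobi parameters $\sqrt{|ac|}\,\sqrt{(n+1)/(n+2)} \to \sqrt{|ac|}$. By Favard's theorem they are orthogonal with respect to a measure $\mu$ on the major-axis interval $[-2\sqrt{|ac|}, 2\sqrt{|ac|}]$, and by Nevai-class results $\mu$ is essentially the Chebyshev-type measure.

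Now take $g = \sum g_n e_n$ orthogonal to all $f_\lambda$. Then $G(\lambda) = \sum \bar g_n p_n(\lambda) = 0$ on $\Psi(\mathbb{D})$. The region $\Psi(\mathbb{D})$ is the interior of an ellipse with foci $b \pm 2\sqrt{ac}$, and it contains the Bernstein ellipse on which the series $\sum \bar g_n p_n$ converges. The main obstacle is to deduce $g_n = 0$ from this. I would first use the Bernstein--Szeg\H{o} asymptotics $p_n(\lambda) \sim \Phi(\lambda)^n$ to show the series converges and is analytic inside the ellipse, with the $\ell^2$ weights controlled by the factor $(|c|/|a|)^{n/2} < 1$. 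This is exactly where $|a| > |c|$ enters. The vanishing of the analytic function $G$ on an open set then extends to the whole interior, including the focal segment. Finally, orthogonality in $L^2(\mu)$ on that segment gives $\bar g_n = \int G\, p_n\, d\mu = 0$. The delicate part is justifying the interchange of sum and integral, which needs uniform convergence on the support of $\mu$.
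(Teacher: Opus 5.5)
Your proof follows the paper's route. For (i) you use the self-commutator identity $[T_\Psi^*,T_\Psi]=(|c|^2-|a|^2)[T_{\bar z},T_z]$, with $[T_{\bar z},T_z]e_n=\frac{1}{(n+1)(n+2)}e_n$. For (ii) you use Theorem \ref{thm_necessary}. For sufficiency you use the three-term recurrence, the rotation by $\arg(ac)/2$, the Favard measure on the focal segment $b+e^{i\theta_0/2}[-2\sqrt{|ac|},2\sqrt{|ac|}]\subset\Psi(\mathbb{D})$, and coefficient extraction by orthogonality. Your symmetric normalization $x_n=(c/a)^{n/2}y_n$ gives the same Jacobi matrix as the paper's monic $P_j=a^jh_j$.

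\textbf{The gap: the case $c=0$.} The theorem allows $c=0$, but your sufficiency argument silently assumes $c\neq 0$. When $c=0$:
\begin{itemize}
\item the rescaling by $(c/a)^{n/2}$ degenerates;
\item every recurrence coefficient vanishes, so Favard's theorem yields nothing (the ``measure'' is $\delta_0$, the focal segment collapses to the centre of what is now a disc, and $\|Q_k\|^2=\prod_m\kappa_m=0$);
\item orthogonality therefore cannot recover the $g_n$.
\end{itemize}
You need a separate argument, which the paper supplies. For $c=0$ the eigenvectors are the Bergman kernels $f_\lambda(z)=(1-\eta z)^{-2}=\sum_n\sqrt{n+1}\,\eta^n e_n$ with $\eta=(\lambda-b)/a$. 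Hence $\langle f_\lambda,g\rangle=\sum_n\sqrt{n+1}\,\eta^n\,\bar g_n$ is a power series in $\eta$. If it vanishes on an open set, every $g_n$ is zero.

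\textbf{The interchange of sum and integral.} You do not need Bernstein--Szeg\H{o} or Nevai-class asymptotics here. The series $\sum_n\bar g_n x_n(\lambda)$ is just the coordinate expansion of $\langle f_\lambda,g\rangle$. Cauchy--Schwarz gives
\[
\Bigl|\sum_{n\ge N}\bar g_n x_n(\lambda)\Bigr|\le\Bigl(\sum_{n\ge N}|g_n|^2\Bigr)^{1/2}\|f_\lambda\|.
\]
The local $H^\infty$ bound on $f_\lambda$ from Step 2 of Theorem \ref{thm1Statement2} then gives uniform convergence on the compact focal segment. This is also the justification the paper's bare assertion of uniform convergence needs. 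Since you already take $g\perp f_\lambda$ for all $\lambda\in\Psi(\mathbb{D})$, your analytic-continuation step is redundant.

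\textbf{Minor slips.}
\begin{itemize}
\item The extraction formula should read $\bar g_k\,(c/a)^{k/2}=\int G\,y_k\,d\nu$, with the orthonormal $y_k$ in the rotated variable, not $\bar g_k=\int G\,p_k\,d\mu$. This is harmless when $c\neq0$.
\item Being normaloid is not why hyponormal operators fail to be hypercyclic: $2B$ (twice the backward shift on $\ell^2$) is normaloid and hypercyclic. The real mechanism (Kitai) is that $\|T^{n+1}x\|/\|T^nx\|$ is nondecreasing for hyponormal $T$, so every orbit is either bounded or bounded away from $0$. Your citation of the fact itself is fine.
\end{itemize}
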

\begin{proof}
\textit{Necessity.} We first establish that $|a| > |c|$ is strictly necessary. If $|a| \leq |c|$, we analyze the self-commutator of $T_\Psi$. Since $T_\Psi = a T_{\bar{z}} + c T_z + b I$, its adjoint is $T_\Psi^* = \bar{a} T_z + \bar{c} T_{\bar{z}} + \bar{b} I$. The self-commutator is given by $[T_\Psi^*, T_\Psi] = T_\Psi^* T_\Psi - T_\Psi T_\Psi^*$. Using the commutation relations $[T_z, T_z] = [T_{\bar{z}}, T_{\bar{z}}] = 0$, this simplifies to $[T_\Psi^*, T_\Psi] = (|c|^2 - |a|^2) [T_{\bar{z}}, T_z]$. On the Bergman space, the commutator $[T_{\bar{z}}, T_z]$ is a strictly positive operator: specifically, $[T_{\bar{z}}, T_z] z^n = \frac{1}{(n+1)(n+2)} z^n$ for all $n \geq 0$. Consequently, if $|a| < |c|$, we have $[T_\Psi^*, T_\Psi] > 0$, meaning $T_\Psi$ is a strictly hyponormal operator. If $|a| = |c|$, then $[T_\Psi^*, T_\Psi] = 0$ and $T_\Psi$ is normal. It is a fundamental result in linear dynamics that hyponormal and normal operators are never hypercyclic \cite{Kitai} (the norms along any orbit satisfy $\|T^n x\| \geq \|x\|$ or grow monotonically, precluding density). Therefore, the strict inequality $|a| > |c|$ is necessary.

Next, since $|a| > |c|$ holds, $\Psi$ is sense-reversing (as $|\psi'(z)| = |c| < |a|$ on $\overline{\mathbb{D}}$), and Theorem \ref{thm_necessary} immediately implies that both $\mathbb{D} \cap \Psi(\mathbb{D}) \neq \emptyset$ and $\hat{\mathbb{D}} \cap \Psi(\mathbb{D}) \neq \emptyset$ are necessary.

\textit{Sufficiency.} Assume that $|a| > |c|$ and that (ii) holds. For $\psi(z) = b + cz$, we have $|\psi'(z)| = |c| < |a|$, so condition (a') of Theorem \ref{thm1Statement2} is fulfilled, while condition (c') is satisfied by assumption. By Theorem \ref{thm1Statement2}, to conclude that $T_\Psi$ is hypercyclic via the Godefroy--Shapiro criterion, it suffices to prove that the family of eigenvectors $\{f_\lambda\}_{\lambda \in \Psi(\mathbb{D})}$ spans a dense subspace of $L_A^2(\mathbb{D})$.

If $c = 0$, $T_\Psi = a T_{\bar{z}} + b I$ is an affine multiple of the Bergman backward shift; its eigenfunctions are the Bergman reproducing kernels $f_\lambda(z) = \left(1 - \frac{\lambda - b}{a} z\right)^{-2}$, which span $L_A^2(\mathbb{D})$ densely for any non-empty open set of parameters $\lambda$ (as any $g \in L_A^2$ orthogonal to these kernels satisfies $g\left(\frac{\overline{\lambda - b}}{\bar{a}}\right) = 0$ on an open set, forcing $g \equiv 0$ by the Identity Theorem).

Now assume $c \neq 0$. Let $g(z) = \sum_{j=0}^\infty b_j z^j \in L_A^2(\mathbb{D})$ be orthogonal to $f_\lambda$ for all $\lambda \in U_1 \subset \mathbb{D} \cap \Psi(\mathbb{D})$. We must show $g = 0$, i.e., $b_j = 0$ for all $j \geq 0$.

Setting $\mu = \lambda - b$, the domain $\Omega := \Psi(\mathbb{D}) - b = \{a\bar{z} + cz : z \in \mathbb{D}\}$ is an open ellipse centered at the origin. With $h_j(\mu) := \frac{a_j(\lambda)}{j+1}$, the inner product is given by
$$ F(\mu) := \langle f_\lambda, g \rangle_{L_A^2} = \sum_{j=0}^\infty \overline{b_j} h_j(\mu). $$
By Step 2 of Theorem \ref{thm1Statement2} (and Lemma 4.2 of \cite{Leng}), $F(\mu)$ is holomorphic on the connected domain $\Omega$. Since $F \equiv 0$ on the non-empty open set $U_1 - b \subset \Omega$, the Identity Theorem guarantees $F(\mu) \equiv 0$ throughout $\Omega$.

The polynomials $P_j(\mu) := a^j h_j(\mu)$ satisfy $P_0(\mu) = 1$, $P_1(\mu) = \mu$, and the monic three-term recurrence
$$ P_{j+1}(\mu) = \mu P_j(\mu) - \gamma_j P_{j-1}(\mu), \quad \text{where} \quad \gamma_j = \frac{ac j}{j+1} \quad (j \geq 1). $$
Write $ac = |ac| e^{i\theta_0}$ with $\theta_0 = \arg(ac)$, and perform the spectral rotation
$$ \mu = e^{i\theta_0 / 2} w, \qquad Q_j(w) := e^{-ij\theta_0 / 2} P_j(e^{i\theta_0 / 2} w). $$
Because the diagonal recurrence coefficients are zero, $P_j(-\mu) = (-1)^j P_j(\mu)$, so every monomial in $P_j(\mu)$ has the form $(ac)^m \mu^{j-2m}$. Since $(ac)^m (e^{i\theta_0/2} w)^{j-2m} = e^{ij\theta_0/2} |ac|^m w^{j-2m}$, the phase factor factors out identically, yielding the monic recurrence with strictly positive coefficients:
$$ Q_0(w) = 1, \quad Q_1(w) = w, \quad Q_{j+1}(w) = w Q_j(w) - \kappa_j Q_{j-1}(w), \quad \text{where} \quad \kappa_j = |ac| \frac{j}{j+1} > 0. $$
By Favard's theorem, $\{Q_j\}_{j=0}^\infty$ is an orthogonal polynomial sequence with respect to a positive unit Borel measure $\nu$ on $\mathbb{R}$. The corresponding symmetric tridiagonal Jacobi operator $J$ has zero diagonal and off-diagonal entries $J_{j, j+1} = \sqrt{\kappa_j} < \sqrt{|ac|}$. Its operator norm on $\ell^2(\mathbb{N}_0)$ satisfies $\|J\| \leq 2 \sup_{j \geq 1} \sqrt{\kappa_j} \leq 2\sqrt{|ac|}$, and therefore the spectrum and support satisfy
$$ \operatorname{supp}(\nu) = \sigma(J) \subset \left[-2\sqrt{|ac|}, \; 2\sqrt{|ac|}\right]. $$

In the $\mu$-plane, this support corresponds to the straight line segment
$$ \Sigma := \left\{ e^{i\theta_0 / 2} x : x \in \left[-2\sqrt{|ac|}, \; 2\sqrt{|ac|}\right] \right\}. $$
Writing $a = |a|e^{i\alpha}$ and $c = |c|e^{i\beta}$ (so $\theta_0 = \alpha + \beta$), the boundary $\partial\Omega$ is parametrized by $\mu(\theta) = a e^{-i\theta} + c e^{i\theta} = e^{i\theta_0/2}[(|a|+|c|)\cos\psi - i(|a|-|c|)\sin\psi]$ with $\psi = \theta - \frac{\alpha-\beta}{2}$. In the $w$-plane, the ellipse is in standard position with semi-major axis $|a|+|c|$ along the real axis. For each $x \in [-2\sqrt{|ac|}, 2\sqrt{|ac|}]$, the point $\mu = x e^{i\theta_0/2}$ is the image under $\Psi(z) - b$ of $z = \frac{x}{|a|+|c|} e^{i(\alpha-\beta)/2}$, whose modulus satisfies
$$ |z| = \frac{|x|}{|a| + |c|} \leq \frac{2\sqrt{|ac|}}{|a| + |c|} < 1, $$
by the strict arithmetic-geometric mean inequality (since $|a| > |c| > 0$). Thus $\Sigma$ is a compact subset strictly contained in the open ellipse $\Omega$.

Since the series $\sum_{j=0}^\infty \overline{b_j} h_j(\mu)$ converges uniformly on compact subsets of $\Omega$, the function $\Phi(w) := F(e^{i\theta_0/2} w) = \sum_{j=0}^\infty C_j Q_j(w)$, where $C_j = \overline{b_j} a^{-j} e^{ij\theta_0/2}$, converges uniformly to $0$ on $\operatorname{supp}(\nu) \subset [-2\sqrt{|ac|}, 2\sqrt{|ac|}]$. Integrating term-by-term against $Q_k(x) d\nu(x)$ yields
$$ 0 = \int_{\operatorname{supp}(\nu)} \Phi(x) Q_k(x) \, d\nu(x) = C_k \|Q_k\|_{L^2(\nu)}^2. $$
Since $\|Q_k\|_{L^2(\nu)}^2 = \prod_{m=1}^k \kappa_m = \frac{|ac|^k}{k+1} > 0$, we have $C_k = 0$, which implies $b_k = 0$ for all $k \geq 0$. Hence $g \equiv 0$, proving that $\{f_\lambda\}_{\lambda \in U_1}$ spans a dense subspace of $L_A^2(\mathbb{D})$. The identical argument applies to $U_2 \subset \hat{\mathbb{D}} \cap \Psi(\mathbb{D})$, completing the proof.
\end{proof}

\begin{rem}
Theorem \ref{thm_tridiagonal} provides the exact Bergman space analogue of the classical Hardy space characterization established by Baranov and Lishanskii \cite{Baranov}. In the Bergman setting, Leng and Zhao \cite{Leng} recently proved sufficiency under the restrictive bound $|a| > (3+\sqrt{2})|c|$, which arose as an artifact of inscribing a disk inside the ellipse $\Psi(\mathbb{D}) - b$, thereby constraining the operator bounds to the semi-minor axis $|a| - |c|$. By rotating the spectral parameter to align with the major axis and exploiting the intrinsic Favard orthogonality measure, the semi-minor axis restriction is eliminated, establishing that the natural geometric threshold $|a| > |c|$ is unconditionally both necessary and sufficient.
\end{rem}

\smallskip
\noindent{\bf Data availability statement} The author declares that data sharing is not applicable to this article as no datasets were generated or analyzed during the current study.

\smallskip
\noindent{\bf Funding statement} This research received no specific grant from any funding agency in the public, commercial, or not-for-profit sectors.

\end{document}